\newtheorem{thm}{Theorem}[section]
\newtheorem{prop}[thm]{Proposition}
\newtheorem{conj}[thm]{Conjecture}
\newtheorem{obs}[thm]{Observation}
\theoremstyle{plain}
\newtheorem{defi}[thm]{Definition}
\newtheorem{rem}[thm]{Remark}
\newcommand{\tref}[1]{Theorem~\textup{\ref{#1}}}
\newcommand{\pref}[1]{Proposition~\textup{\ref{#1}}}
\newcommand{\cref}[1]{Corollary~\textup{\ref{#1}}}
\newcommand{\rref}[1]{Remark~\textup{\ref{#1}}}
\newcommand{\dref}[1]{Definition~\textup{\ref{#1}}}
\newcommand{\fref}[1]{Figure~\textup{\ref{#1}}}
\newcommand{\sref}[1]{Section~\textup{\ref{#1}}}
\newcommand{\ssref}[1]{Subsection~\textup{\ref{#1}}}
\newcommand{\oref}[1]{Observation~\textup{\ref{#1}}}
\newcommand{\coref}[1]{Conjecture~\textup{\ref{#1}}}
\newcommand{\T}{\mathcal T}
\newcommand{\TS}{\mathcal T^{Sym}}
\newcommand{\s}{\sigma}
\newcommand{\E}{\mathbb E}
\newcommand{\Prob}{\mathbb P}
\let\leq=\leqslant
\let\geq=\geqslant
\let\ll=\llbracket
\let\rr=\rrbracket
\title{Occupied corners in tree-like tableaux}
\author{Patxi Laborde Zubieta}
\begin{document}
\maketitle
\begin{abstract}
    \begin{description}
 Tree-like tableaux are combinatorial objects that appear in a combinatorial understanding of the PASEP model from statistical mechanics. In this understanding, the corners of the Southeast border correspond to the locations where a particle may jump to the right. Such corners may be of two types: either empty or occupied. Our main result is the following: on average there is one occupied corner per tree-like tableau. We give two proofs of this, a short one which gives us a polynomial version of the result, and another one using a bijection between tree-like tableaux and permutations which gives us an additional information. Moreover, we obtain the same result for symmetric tree-like tableaux and we refine our main result to an equivalence class. Finally we present a conjecture enumerating corners, and we explain its consequences for the PASEP.
    \end{description}
\end{abstract}

\section{Introduction}
Tree-like tableaux are certain fillings of Young diagrams that were introduced by Aval, Boussicault and Nadeau in \cite{avbona13}. They are combinatorially equivalent to permutation tableaux and alternative tableaux \cite{SteinWil,Vien} which are counted by $n!$. Over the last years, these objects have been the subject of many papers \cite{Nad11,Burstein,CorNad}. The equilibrium state of the PASEP, an important model from statistical mechanics, can be described using these objects as shown in the papers \cite{CorWil_Markov,CorWil_Tableaux,CorWil10}. This model is a Markov chain where particles jump stochastically to the left or to the right by one site in a one-dimensional lattice of $n$ sites. We recall briefly its definition below.

The states of the PASEP are encoded by words in $\bullet$ and $\circ$ of length $n$, where $\circ$ corresponds to an empty site and $\bullet$ to a site with a particle. To perform a transition from a state, first choose uniformly at random one of the $n+1$ locations which are: at the left of the word, at the right of the word and between two sites. Then, with a probability defined beforehand, make a jump of particle if it is possible. \fref{fig:PASEP} gives an illustration of the PASEP model.
\begin{figure}[h!]
    \begin{center}
        \includegraphics[scale=.7]{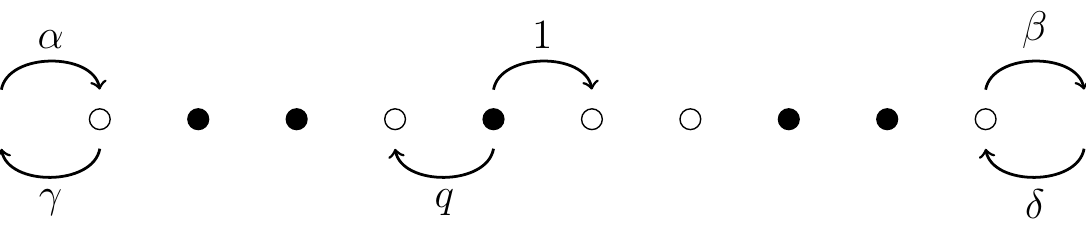}
    \caption{The PASEP model.}
    \label{fig:PASEP}
    \end{center}
\end{figure}
The link with tree-like tableaux appears when $\gamma=\delta=0$; there is a Markov chain over tree-like tableaux that projects to the PASEP \cite{CorWil_Markov}. The state to which a tree-like tableau projects is given by his Southeast border as follows: we travel along this border starting from the Southwest and without taking in account the first and the last step; an East step corresponds to $\bullet$ and a North step to $\circ$. In particular, the locations where a particle may jump to the right, \textit{i.e} the patterns $X\bullet\circ Y$, $\circ Y$ and $X\bullet$, coincides with \emph{corners}; while the locations where a particle may jump to the left, \textit{i.e} the patterns $X\circ\bullet Y$, since $\gamma=\delta=0$, corresponds to the \emph{inner corners}. The number of locations where a particle may jump to the right or to the left does not seem to have been studied yet.

In this paper, we explain in which way knowing the weighted average of the number of corners in tree-like tableaux of size $n$, gives us the weighted average of the number of locations where a particle may jump to the right or to the left in a state of the PASEP. Moreover, we conjecture there are $\frac{n+4}{6}$ corners per tree-like tableau, which implies $\frac{n+2}{3}$ jumping locations per PASEP state in the case $\alpha=\beta=q=1$. Quite naturally we made a computer exploration of the average number of corners in symmetric tree-like tableaux of size $2n+1$: it should be $\frac{4n+13}{12}$.
 We can distinguish two types of corners, the occupied corners which are filled with a point and the empty corners (cf. \dref{def:oc}). It appears that in average there is one occupied corner per tree-like tableau. We present two proofs of this result, a short one which gives us also a polynomial version, and another one using the bijection between tree-like tableaux and permutations that sends crossings to occurrences of the generalized pattern 2-31 (cf. \cite[Section~4.1]{avbona13}). The additional information given by the second proof is the proportion of occupied corners numbered $k$, where the numbering is given by the insertion algorithm. Using the same idea of the first proof, we show also that there is an occupied corner per symmetric tree-like tableaux. Moreover we can define an equivalence relation over tree-like tableaux (\ssref{sec:thm_oc_NA}) based on the position of the points in the diagram, we prove that in each equivalence class there is one occupied corner per tree-like tableau, it is a refinement of the previous result.

The article is organised as follows. \sref{sec:tlt} gives the definition of tree-like tableaux and introduces the insertion algorithm, $Insertpoint$. \sref{sec:enum_oc} is the main part of this paper: we give the definition of occupied corners and state our main result. Moreover, after giving the two different proofs, we extend this result to symmetric tree-like tableaux. We finish this section by a refinement of the main result that can be restated over lattice paths. \sref{sec:conj} describe more precisely the link between tree-like tableaux and the PASEP when $\delta=\gamma=0$, and states the two conjectures that enumerates corners in tree-like tableaux and symmetric tree-like tableaux.

\section{Tree-like tableaux}
\label{sec:tlt}
In this section we recall basic notions and tools about tree-like tableaux. All the details can be found in the article \cite{avbona13}.

\begin{defi}[Tree-like tableau]\label{def:tlt}
A \emph{tree-like tableau} is a filling of Young diagram with points inside some cells, with  three rules:
\begin{enumerate}
\item the top left cell has a point called the \emph{root point};
\item\label{def:tlt_cond_2} for each non root point, there is a point above it in the same column or to its left in the same row, but not both at the same time;
\item there is no empty row or column.
\end{enumerate}
\end{defi}

The \emph{size} of a tree-like tableau is its number of points. We denote by $\T_n$ the set of the tree-like tableaux of size $n$. In a tree-like tableau, we call \emph{border edges}, the edges of the Southeast border. A tree-like tableau of size \begin{math}n\end{math} has \begin{math}n+1\end{math} border edges, we index them from 1 to \begin{math}n+1\end{math} as it is done in \fref{fig:tlt_ex}. The border edge numbered by \begin{math}i\end{math} in a tree-like tableau \begin{math}T\end{math} is denoted by \begin{math}e_i(T)\end{math}. In the rest of the article, when there is no ambiguity, the tableau $T$ might be omitted in all the notations.

\begin{figure}[ht]
    \begin{center}
        \includegraphics[scale=.6]{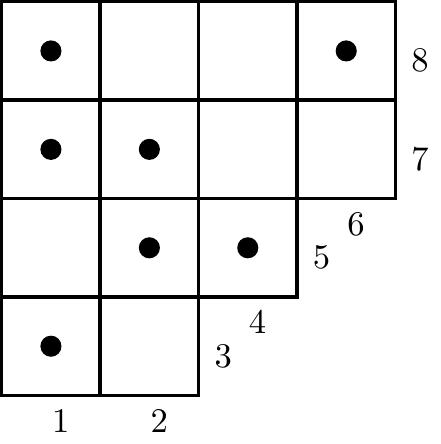}
        \caption{A tree-like tableau of size 7.}
        \label{fig:tlt_ex}
    \end{center}
\end{figure}

The set of tree-like tableaux has an inductive structure given by an insertion algorithm called $Insertpoint$ which constructs a tree-like tableau of size \begin{math}n+1\end{math} from a tree-like tableau of size \begin{math}n\end{math} and the choice of one of its border edges. We briefly present the algorithm in order for the article to be self-contained. A more detailed presentation is given in \cite{avbona13}.\newline
The \emph{special point} plays an important role in $Insertpoint$, it is the right-most point among those at the bottom of a column. We denote by \begin{math}sp(T)\end{math} the index of the horizontal border edge under the special point of a tree-like tableau \begin{math}T\end{math}. In the figures of this article, the special point might be indicated by a square around it. The second notion we need is the \emph{ribbon}, it is a connected set of empty cells (with respect to adjacency) containing no \begin{math}2\times 2\end{math} squares. Now we can introduce the insertion algorithm.

\begin{defi}[Insertpoint]
Let \begin{math}T\end{math} be a tree-like tableau of size \begin{math}n\end{math} and \begin{math}e_i\end{math} one of its border edges. First, if \begin{math}e_i\end{math} is horizontal (resp. vertical) we insert a row (resp. column) of empty cells, just below (resp. to the right of) $e_i$, starting from the left (resp. top) border of $T$ and ending below (resp. to the right of) \begin{math}e_i\end{math}. Moreover we put a point in the right-most (resp. bottom) cell. We obtain a tree-like tableau \begin{math}T'\end{math} of size \begin{math}n+1\end{math}. Then, depending on the relative ordering of \begin{math}i\end{math} and \begin{math}sp(T)\end{math}, we define \begin{math}Insertpoint(T,e_i)\end{math} as follows.
\begin{enumerate}
\item If \begin{math}i\geq sp\end{math}, then \begin{math}Insertpoint(T,e_i)= T'\end{math};
\item otherwise, we add to \begin{math}T'\end{math} a ribbon along the border, from the new point to the special point of \begin{math}T\end{math}. This new tree-like tableau will be \begin{math}Insertpoint(T,e_i)\end{math}.
\end{enumerate}
\end{defi}
An example of the two possible insertions is given in \fref{Fig:AlgIns} where the cells of the new row/column are shaded and the cells of the ribbon are marked by crosses.
\begin{figure}[h!]
    \begin{center}
        \captionsetup[subfigure]{width=84pt,format=hang,singlelinecheck=false}
        \subfloat[A tree-like tableau.]{\includegraphics[scale=.4]{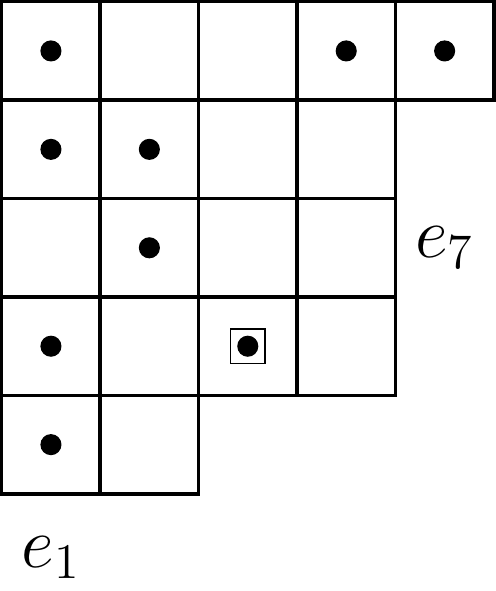}}
        \hfil\hfil\hfil
        \subfloat[Insertion at $e_1$.]{\includegraphics[scale=.4]{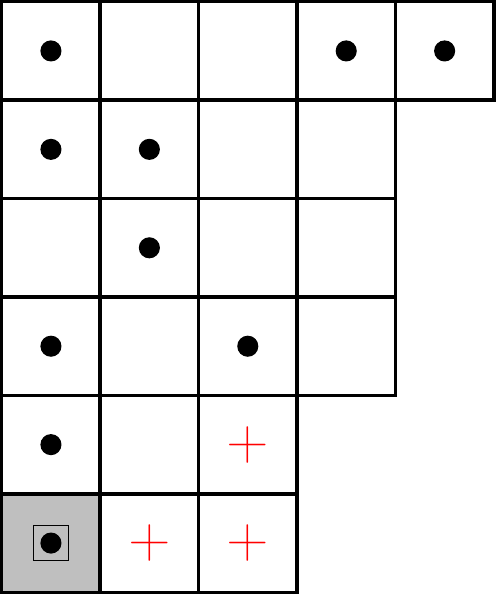}}
        \hfil\hfil\hfil
        \subfloat[Insertion at $e_7$.]{\includegraphics[scale=.4]{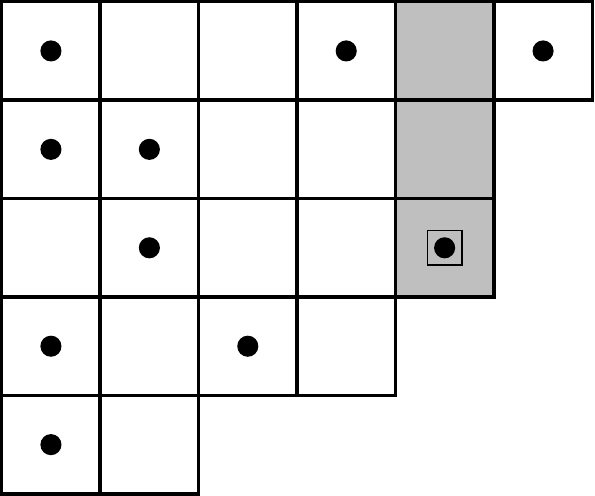}}
        \caption{Insertion algorithm applied to $T$ at $e_1$ and $e_7$.}
        \label{Fig:AlgIns}
    \end{center}
\end{figure}
\begin{rem}\label{rem:index_sp}
 We notice that the new point is the special point of the new tableau. In addition, the index of the horizontal border edge under the new point is equal to the one we chose during the algorithm, in other words:
$$sp(Insertpoint(T,i))=i.$$
\end{rem}

\section{Enumeration of occupied corners}
\label{sec:enum_oc}
As explained in the introduction, \emph{corners} (see \dref{def:oc}) are interesting because in the PASEP model they correspond to the locations where a particle may jump to the right. Our main result is that in $\T_n$, on average there is one \emph{occupied corner} per tree-like tableaux. It is a nice and surprising property, and a first step in the study of unrestricted corners. In this section, we show this new result in two\footnote{In the unpublished note \cite{LZ}, we present a third proof based on two mirror insertion algorithms.} different ways. The first proof (\ssref{sec:shortproof}) is the shortest one and gives us a polynomial version of the result. The second proof (\ssref{sec:enum_proof}) is interesting because it tells us in which proportion the point number $k$ is in a corner, where the numbering is induced by the insertion algorithm. In \ssref{sec:sym}, we extend the result to symmetric tree-like tableaux, again with a polynomial version. Finally, in the last subsection, we define an equivalence relation over the tree-like tableaux and we show that in each equivalence class, there is one occupied corner per tree-like tableaux.
\subsection{Main result}
\label{sec:main_result}

First of all, let us define occupied corners.
\begin{defi}[Occupied corner]\label{def:oc}
In a tree-like tableau $T$, the \emph{corners} are the cells for which the bottom and the right edges are border edges. We say that a corner is \emph{occupied}, if it contains a point. We denote by $oc(T)$ the number of occupied corners of $T$, we extend this notation to a subset $X$ of $\T_n$ as follows, \begin{math}oc(X)=\sum_{T\in X}oc(T).\end{math}
\end{defi}
\fref{fig:def_oc} gives us an example of a tree-like tableau $T$ for which $oc(T)=2$.
\begin{figure}[h!]
    \begin{center}
        \includegraphics[scale=.4]{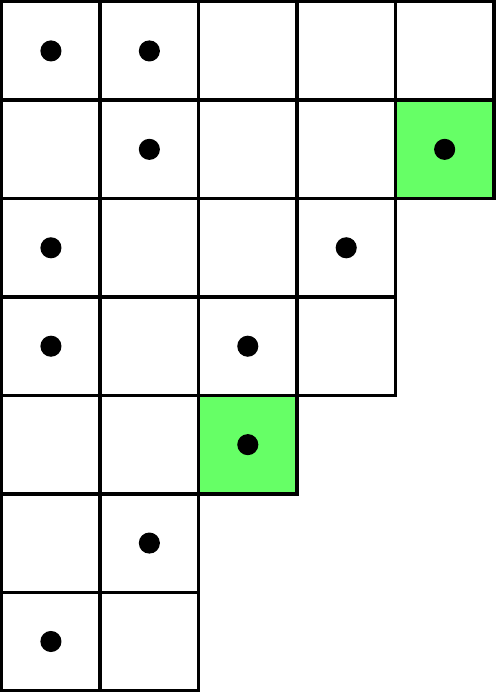}
        \caption{A tree-like tableau with 4 corners, 2 of which are occupied.}
        \label{fig:def_oc}
    \end{center}
\end{figure}

\begin{thm}\label{thm:main_result}
The number of occupied corners in the set of tree-like tableaux of size $n$ is given by:
\begin{displaymath}
oc(\T_n)=n!.
\end{displaymath}
\end{thm}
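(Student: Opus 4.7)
The plan is to prove the identity by constructing an explicit bijection
\[
\Phi \colon \bigl\{(T, c) : T \in \T_n, \; c \text{ an occupied corner of } T\bigr\} \;\longrightarrow\; \T_{n-1} \times \{1, \ldots, n\},
\]
since the target has cardinality $n \cdot (n-1)! = n!$ (using the known $|\T_m|=m!$). The forward map is \emph{corner removal}: given $(T,c)$, let $j$ be the index of the horizontal border edge at the bottom of $c$, and let $T'$ be obtained from $T$ by erasing the pointed cell $c$ and then deleting the row or the column of $c$ if it becomes devoid of points. Set $\Phi(T,c)=(T',j)$. The only nontrivial verification is that $T'\in\T_{n-1}$: condition~2 of \dref{def:tlt} cannot fail, because nothing lies below or to the right of a corner, so no remaining point loses its ``witness above'' or ``witness to the left''; the row/column deletion step enforces the absence of empty rows and columns; and the root is untouched.

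The inverse, which I call \emph{corner insertion at index $j$}, proceeds by walking along the southeast border of $T'$ to locate the unique site at which attaching a new pointed corner cell, together with at most one additional empty row or empty column to accommodate it, produces a tree-like tableau of size $n$ whose $j$-th border edge is the horizontal edge just below the new corner. The operation is forced once the target edge index $j$ is specified, because removing a corner of $T$ fuses one horizontal and one vertical border edge into a single reflex corner of $T'$, and this move is reversible.

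Refining $\Phi$ by the value of $j$ yields the promised polynomial version: for each fixed $j$ the restriction of $\Phi$ is a bijection onto $\T_{n-1}$, whence
\[
\sum_{T \in \T_n}\ \sum_{c \text{ occupied corner of } T} x^{j(c)} \;=\; (n-1)! \cdot \bigl(x + x^2 + \cdots + x^n\bigr),
\]
and setting $x=1$ recovers $oc(\T_n)=n!$.

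The main obstacle is making the inverse map precise and well defined: one has to case-split on whether the new corner attaches at an existing reflex corner of $T'$'s Young diagram (no extra cells needed) or whether an additional empty row or column must be introduced first, and to check that the added empty cells do not break condition~2 of \dref{def:tlt} for any pre-existing point of $T'$. This local analysis is the bulk of the work, and it is also what makes the polynomial refinement transparent.
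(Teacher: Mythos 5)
Your strategy is the same as the paper's short proof: put the occupied corners of $\T_n$ in bijection with $\T_{n-1}\times\ll 1,n \rr$ by deleting, through the corner, the row or column containing no other point, and recording the index $j$ of the corner's horizontal border edge. Your forward map agrees with the paper's; note that it tacitly relies on condition~2 of \dref{def:tlt} (for $n\geq 2$ a corner point is not the root, so exactly one of its row and its column contains no other point), which is what makes the deletion unambiguous and guarantees no empty row or column survives.

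However, as written the proof is incomplete, and precisely where you yourself flag ``the bulk of the work'': the inverse map, which carries all of the injectivity and surjectivity, is only gestured at, and the sketch you give of it is partly wrong. You propose to walk along the border of $T'$ to find ``the unique site'' for the new pointed corner, with a case in which it attaches at a reflex (inner) corner of $T'$ with no extra cells. That case can never occur: every row and every column of $T'$ contains a point, so a lone pointed cell glued into an inner corner would have a point above it in its column and a point to its left in its row, violating condition~2 of \dref{def:tlt}. Similarly, the deletion does not ``fuse one horizontal and one vertical border edge into a single reflex corner''; it leaves a single border edge $e_j(T')$, and the orientation of that edge is the key datum you are missing: $e_j(T')$ is horizontal exactly when a row was deleted and vertical exactly when a column was deleted. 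This makes the inverse canonical and search-free: if $e_j(T')$ is horizontal, insert a full new row just below it, running from the left border to below $e_j$, and point its rightmost cell; if vertical, insert the corresponding new column and point its bottom cell (the first step of $Insertpoint$, with no ribbon). The new cell is then an occupied corner whose horizontal border edge has index $j$ (\rref{rem:index_sp}), and this visibly undoes your deletion. Until this inverse is stated and checked, the count $oc(\T_n)=n\times(n-1)!$ is not justified. A minor remark: your refinement by the index $j$ is correct once the bijection is in place, but it is a different (and weaker) statement than the paper's polynomial version, which tracks the distribution of $oc(T)$ itself via the recurrence for $P_n$.
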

In other words, on average there is one occupied corner per tree-like tableau. The reader may check \tref{thm:main_result} for the case $n=3$ using \fref{fig:thm_case_3}.
\begin{figure}[h!]
    \begin{center}
        \captionsetup[subfigure]{margin=-4pt}
        \subfloat[$oc=2$]{\includegraphics[scale=.5]{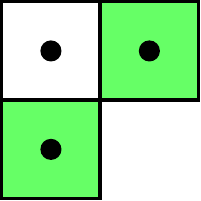}}
        \hfil
        \captionsetup[subfigure]{margin=-12pt}
        \subfloat[$oc=1$]{\includegraphics[scale=.5]{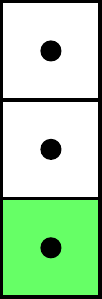}}
        \hfil
        \captionsetup[subfigure]{margin=-5pt}
        \subfloat[$oc=1$]{\includegraphics[scale=.5]{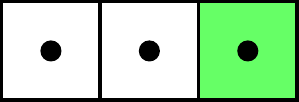}}
        \hfil
        \subfloat[$oc=1$]{\includegraphics[scale=.5]{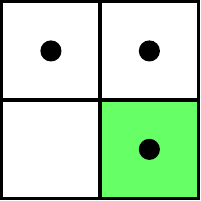}}
        \hfil
        \subfloat[$oc=1$]{\includegraphics[scale=.5]{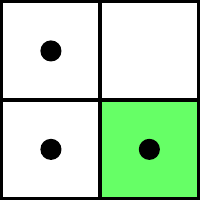}}
        \hfil
        \subfloat[$oc=0$]{\includegraphics[scale=.5]{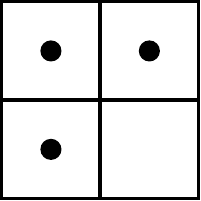}}
        \hfil
        \caption{\tref{thm:main_result} for n=3.}
        \label{fig:thm_case_3}
    \end{center}
\end{figure}

\subsection{Short proof}\label{sec:shortproof}
We put in bijection the set of occupied corners in $\T_n$ with the set of pairs composed of a tree-like tableau of size $n-1$ and an integer of $\ll 1,n \rr$. Let $OC$ be an occupied corner in $\T_n$, let $T$ be the tree-like tableau corresponding to $OC$ and let $i$ be the index of the horizontal border edge of $OC$. Among the row and the column containing $OC$ there is one which has no other point than the one of $OC$. In the case where it is the row (resp. column), we remove this row (resp. column) and we call $T'$ the tree-like tableau of size $n-1$ we obtain. In $T'$ the border edge of index $i$ is horizontal (resp. vertical). From the pair $(T',i)$ we can construct back $OC$ by inserting a row (resp. a column) at $e_i(T')$ and putting a point in the right-most (resp. the bottom) cell. The \fref{fig:thm_easy} gives us an illustration of the bijection. The conclusion follows since the set of pairs composed of a tree-like tableau of size $n-1$ and an integer of $\ll 1,n \rr$ has cardinality $(n-1)!\times n=n!$.
\begin{figure}[h!]
    \begin{center}
        \subfloat[A tree-like tableau with two corners $c_1$ and $c_2$.]{\includegraphics[scale=.6]{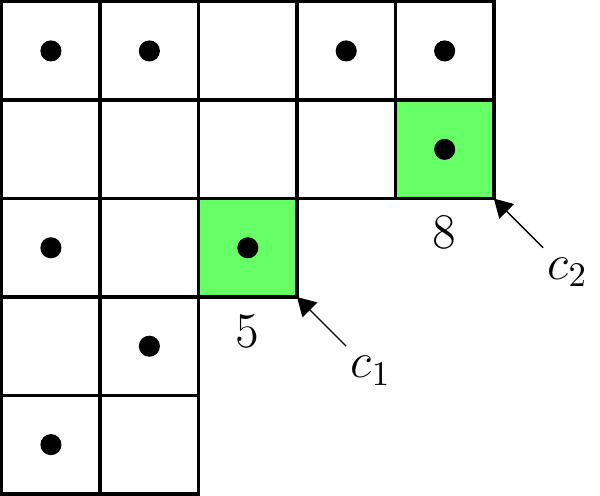}}
        \hfill
        \subfloat[The pair associated to $c_1$.]{\includegraphics[scale=.6]{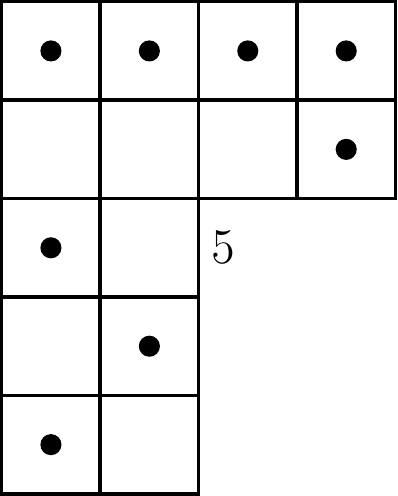}}
        \hfill
        \subfloat[The pair associated to $c_2$.]{\includegraphics[scale=.6]{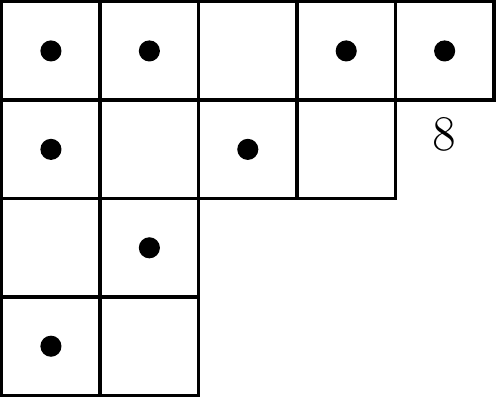}}
        \hfill
        \caption{Bijective proof of \tref{thm:main_result}}
        \label{fig:thm_easy}
    \end{center}
\end{figure}

We can obtain a polynomial version of the main result. Let $P_n(x)=\sum_{T\in\T_n}x^{oc(T)}=\sum_{k\geq 0}a_{n,k}x^k$, the coefficient $a_{n,k}$ corresponds to the number of tree-like tableaux of size $n$ with $k$ occupied corners. In order to determine a recurrence relation defining these polynomials, we start by finding a recurrence relation for the coefficients $a_{n,k}$. We do it by analysing how the number of occupied corners varies in the previous bijection during the addition of an occupied corner. Let $T'$ be a tree-like tableau of size $n-1$, we denote by $k$ the number of occupied corners of $T'$. Applying the previous bijection, if we choose one of the $2k$ border edges of the occupied corners of $T'$, we obtain a tree-like tableau $T$ with $k$ occupied corners, else $T$ has $k+1$ occupied corners. By performing the bijection over all the pairs of $\T_{n-1}\times\ll 1,n \rr$, a tree-like tableau $T$ of size $n$ is obtained $oc(T)$ times. For $n\geq 2$ and $k\geq 1$, we obtain the following recurrence relation:
$$ka_{n,k}=2ka_{n-1,k}+(n-2(k-1))a_{n-1,k-1}.$$
This can be translated as a recurrence relation over the polynomials $P_n$ as follows:
$$xP_n'=2xP_{n-1}'+nxP_{n-1}-2x^2P_{n-1}'.$$
Which is equivalent to :
\begin{equation}\label{eq:main_result_pol_version}
P_n'=nP_{n-1}+2(1-x)P_{n-1}'
\end{equation}
If we set $P_0=1$, the Equation~\eqref{eq:main_result_pol_version} is also true for $n=1$. The sequence of polynomials $(P_n)_{n\geq 0}$ is uniquely defined by:
$$\left\{
\begin{array}{l}
\forall n\geq 1,\text{ }P_n'=nP_{n-1}+2(1-x)P_{n-1}'\text{ and } P_n(1)=n!;\\
P_0=1.
\end{array}
\right.
$$
We recover the main result by noticing that the number of occupied corners in $\T_n$ is equal to $P_n'(1)$. By Equation~\eqref{eq:main_result_pol_version}, $P_n'(1)=nP_{n-1}(1)=n!$ since $P_{n-1}(1)=|\T_{n-1}|=(n-1)!$. To illustrate, we give the first terms of the sequence $(P_n)_{n\geq0}$:
\begin{flalign*}
P_0&=1\\
P_1&=x\\
P_2&=2x\\
P_3&=x^2+4x+1\\
P_4&=6x^2+12x+6.\\
   &\ \ \vdots\\
P_{10}&=720x^5 + 21600x^4 + 188640x^3 + 748800x^2 + 1475280x + 1193760\\
\end{flalign*}

\begin{rem}
The number of tree-like tableaux of size $n$ with 0 occupied corner is given by $P_n(0)$.  By computer exploration using the open-software Sage \cite{sage}, we found that the sequence $(P_n(0))_{n\geq0}$ might be equal to the sequence \cite[A184185]{oeis} which counts the number of permutations of $\{1,\ldots,n\}$ having no cycles of the form $(i,i+1,i+2,\ldots,i+j-1)$ with $j\geq1$. To this point, we were not able to prove this fact.
\end{rem}
In order to estimate the dispersion of the statistic of occupied corners, we will compute the variance $\s^2$. We have the formula $\sigma^2=\frac{1}{n!}(\sum_{T\in\T_n}oc(T)^2)-1$, hence,
\begin{flalign*}
\sigma^2&=\frac{1}{n!}(P_n''(1)+P'_n(1))-1\\
        &=\frac{1}{n!}((n-2)P_{n-1}'(1)+P_n'(1))-1\\
        &=\frac{1}{n!}((n-2)(n-1)!+n!)-1\\
        &=\frac{n-2}{n}
\end{flalign*}
Hence, the variance tends to 1 when $n$ goes to infinity.

\subsection{Enumerative proof using a bijection with permutations}\label{sec:enum_proof}
Let $T$ be a tree-like tableau of size $n\geq2$. There is a unique way to obtain $T$ by performing consecutive insertions starting with the tree-like tableau of size 1. Since $T$ is of size $n$, we do $n-1$ insertions. Hence, we can uniquely encode $T\in\T_n$ by the sequence of the indices chosen during the insertion algorithm: \begin{math}(m_2(T),\ldots,m_n(T))\end{math} where \begin{math}1\leq m_k(T)\leq k\end{math} for \begin{math}k\in\ll 2,n \rr\end{math}. For $k\in\ll 2,n \rr$, we denote by $p_k(T)$ the point added during the $(k-1)$-th insertion. Let us recall that (cf. \rref{rem:index_sp}) when $p_k$ is inserted, the index of the border edge under it, is $m_k$. By convention, $m_1(T)=1$ and $p_1(T)$ corresponds to the point of the unique tree-like tableau of size 1. For example, the tree-like tableau of \fref{fig:tlt_ex} is encoded by $(1,1,3,2,2,1,4)$.

\begin{obs}\label{obs:oc_code}
Let $k\in\ll 2,n \rr$, the point $p_k$ is in a corner if, and only if, the following two conditions occur:
\begin{itemize}
\item $m_k\geq m_{k-1}$, \textit{i.e} if no ribbon is added during its insertion;
\item $m_j>m_k+1$ for all $k<j\leq n$, \textit{i.e} if after $p_k$ all the other points are inserted to the Northeast of the vertical border edge $e_{m_k+1}$ ($p_k$ is not covered by a ribbon).
\end{itemize}
\end{obs}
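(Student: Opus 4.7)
My plan is to trace the cell of $p_k$ and its two adjacent border edges through each insertion in the sequence that builds $T$, and to show that the two stated conditions are exactly what keeps this cell at a corner throughout. The key structural fact I would establish first is that an insertion at $e_i$ alters only the border edges of indices $\geq i$: edges of indices $\leq i-1$ are preserved as physical edges and keep their indices, while edges of indices $\geq i+1$ shift up by one. This follows from a brief case analysis on whether $e_i$ is horizontal or vertical, together with the geometry of the row or column being inserted.

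By \rref{rem:index_sp}, after the insertion of $p_k$ the special point is $p_k$ itself and the horizontal border edge beneath it has index $m_k$. If no ribbon is appended (i.e.\ $m_k \geq m_{k-1}$), both the bottom and the right edges of $p_k$'s cell are new border edges with consecutive indices $m_k$ and $m_k+1$, so $p_k$ lies in a corner of the tableau of size $k$. If a ribbon is appended, its first cell is adjacent to $p_k$'s cell, so $p_k$ is not at a corner immediately after its insertion; since all subsequent insertions merely add cells, this status cannot be recovered. This handles condition~1.

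I would then prove by induction on $j \geq k$, assuming condition~2, the following invariant: in the tableau of size $j$ the cell of $p_k$ still lies in a corner, with its bottom and right border edges at indices $m_k$ and $m_k+1$. The step from $j$ to $j+1$ uses the hypothesis $m_{j+1} > m_k+1$: the insertion occurs strictly to the Northeast of $p_k$'s right edge, so no new cell is placed adjacent to $p_k$ and the structural fact preserves both indices. Any ribbon triggered during this insertion extends from $p_{j+1}$ (at index $m_{j+1}$) to the current special point $p_j$ (at index $m_j$); the induction hypothesis gives $m_j > m_k+1$ when $j > k$, and for $j = k$ the inequality $m_{k+1} > m_k = sp$ forbids any ribbon. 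Hence any appended ribbon stays northeast of $p_k$, and the induction closes.

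Conversely, if condition~2 is violated at the smallest index $j$, the invariant just proved still places $p_k$ at a corner in the tableau of size $j-1$ with its edges at indices $m_k$ and $m_k+1$, and the insertion of $p_j$ destroys this corner. Indeed, if $m_j \in \{m_k, m_k+1\}$ the new row below or new column to the right of $p_k$ directly adjoins a cell to it; otherwise $m_j < m_k \leq sp(T_{j-1})$ forces a ribbon whose path along the border traverses the edges of indices $m_k$ and $m_k+1$, again placing a cell adjacent to $p_k$. Since a destroyed corner can never be restored by adding cells, this completes the converse. The chief technical obstacle is the careful bookkeeping of border-edge indices underlying the structural fact from the first paragraph; once that is in hand, the rest reduces to the three-case analysis just given.
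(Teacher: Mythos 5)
The paper states this as an Observation and offers no proof at all, so your sketch has to stand on its own; its overall architecture (induct along the insertion sequence, use that a corner, once destroyed, can never be re-created, and track the indices of the two edges of $p_k$'s cell) is the right one, but the ``key structural fact'' you reduce everything to is false as stated, and it is precisely where the content of the observation lies. A horizontal insertion at $e_i$ adds a row running all the way to the left border of the tableau: it glues a new cell beneath every cell whose bottom edge lies in the maximal run of consecutive horizontal border edges ending at $e_i$, and those edges can have indices well below $i$. Concretely, in the one-row tableau with two cells, inserting at $e_2$ makes the old $e_1$ (the bottom edge of the root cell) cease to be a border edge, so ``edges of indices $\leq i-1$ are preserved as physical edges'' fails. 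Dually, in the one-column tableau with two cells, inserting at the vertical edge $e_2$ makes the old $e_3$ disappear rather than shift up by one, so the other half of your fact fails as well. Since you justify ``no new cell is placed adjacent to $p_k$'' solely by the insertion being ``strictly to the Northeast'' of $p_k$'s right edge --- exactly the inference these examples show is not valid in general --- and you explicitly defer all the bookkeeping to this lemma, this is a genuine gap rather than a cosmetic one.

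The repair is short but it is the actual point: the border edges covered by the new row (resp.\ column) form an interval of indices \emph{ending} (resp.\ \emph{starting}) at $i$ and consisting of edges of a single direction, all horizontal (resp.\ all vertical). Hence, when $i>m_k+1$, a horizontal insertion cannot cover the edge of index $m_k$: the covering interval ends at $i\geq m_k+2$, so containing $m_k$ would force it to contain the \emph{vertical} edge of index $m_k+1$, a contradiction; and a vertical insertion covers only edges of indices $\geq i>m_k+1$. Together with your (essentially correct) treatment of ribbons --- a ribbon created at step $j+1$ covers border edges of indices in the range between $m_{j+1}+1$ and $m_j$, which under condition~2 lies beyond $m_k+1$, while in the converse case $m_j<m_k\leq sp(T_{j-1})$ it does cover the edge $m_k$ and glues a cell under $p_k$ --- this yields that no cell is ever added below or to the right of $p_k$'s cell and that its two edges keep the indices $m_k$ and $m_k+1$; your induction and the three-case converse then go through as you describe.
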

In order to get a statistic over permutations which is in bijection with the occupied corners and keep the information of the index $k$, we introduce the bijection $\phi$, defined in \cite[Section~4.1]{avbona13}, between tree-like tableaux and permutations. Let $T$ be a tree-like tableau of size $n$ and $(m_1,\ldots,m_n)$ its encoding, $\s=\phi(T)$ is the unique permutation with the non inversion table equal to $(m_1-1,\ldots,m_n-1)$. It can be computed algorithmically in the following way. First, set $\s(n)$ to $m_n$. Suppose now we have defined $\s(n),\ldots,\s(i+1)$. If we take the following notation,
\begin{displaymath}
\{x_1<x_2<\cdots<x_{i}\}=\ll 1,n \rr\setminus\{\s(i+1),\ldots,\s(n)\}
\end{displaymath}
we take $\s(i)=x_{m_i}$. In other words, consider the word $12\cdots n$, remove the $m_n$-th letter, then the $m_{n-1}$-th one, and so on, until getting the empty word. Then, concatenate the letters in the order they got removed, the first removed one being at the end of the word and the last removed one at the beginning. This way, we obtain $\s$ as a word. Let us give an example of this bijection, consider the tree-like tableau $T$ of \fref{fig:tlt_ex} encoded by $(1,1,3,2,2,1,4)$, we have $\phi(T)=6275314$. Now we can translate \oref{obs:oc_code} as a result over permutations.

\begin{prop}\label{prop:stat_perm}
Let $T\in\T_n$ and $\s=\phi(T)$, then for $k\in\ll 2,n \rr$, $p_k$ is in a corner of $T$ if, and only if, the following two conditions occur:
\begin{enumerate}
\item\label{prop:stat_perm_cond1} $\s(k-1)=\s(k)+1$ or $\s(k-1)<\s(k)$;
\item\label{prop:stat_perm_cond2} $\s(j)>\s(k)+1$ for all $k<j\leq n$.
\end{enumerate}
\end{prop}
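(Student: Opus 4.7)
The whole proof will rest on an explicit formula for the code $m_k$ in terms of $\sigma$, which I would derive as a preliminary step. Unpacking the algorithmic description of $\phi$ shows that $\{x_1, \ldots, x_k\} = \ll 1, n\rr \setminus \{\sigma(k+1), \ldots, \sigma(n)\}$ coincides with $\{\sigma(1), \ldots, \sigma(k)\}$, so $m_k$ is the rank of $\sigma(k)$ in $\{\sigma(1), \ldots, \sigma(k)\}$; equivalently,
\[
m_k = \sigma(k) - |\{i > k : \sigma(i) < \sigma(k)\}|.
\]
Every comparison of $m$-values in the proof will be done through this identity.

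\textbf{Condition (2).} I would first show that Observation \ref{obs:oc_code}(2) is equivalent to Proposition \ref{prop:stat_perm}(\ref{prop:stat_perm_cond2}). For the forward direction, if $\sigma(i) > \sigma(k) + 1$ for every $i > k$, the identity gives $m_k = \sigma(k)$; for each $j > k$, any value $\sigma(i) < \sigma(j)$ with $i > j$ must lie in $\{\sigma(k)+2, \ldots, \sigma(j) - 1\}$, yielding $m_j \geq \sigma(k) + 2 = m_k + 2$. For the converse I would argue the contrapositive. Suppose some $j > k$ has $\sigma(j) \leq \sigma(k) + 1$. Either there exists $j^* > k$ with $\sigma(j^*) < \sigma(k)$, in which case the smallest such $j^*$ satisfies $m_{j^*} \leq m_k$ because positions strictly between $k$ and $j^*$ all carry values larger than $\sigma(k) > \sigma(j^*)$; or no such $j^*$ exists and the (unique) position $j$ with $\sigma(j) = \sigma(k) + 1$ gives $m_j = m_k + 1$. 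Either way the inequality $m_j > m_k + 1$ fails.

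\textbf{Condition (1) under (2).} Working now under the common hypothesis of Observation \ref{obs:oc_code}(2) and Proposition \ref{prop:stat_perm}(\ref{prop:stat_perm_cond2}), which forces $m_k = \sigma(k)$, I would split on the relative order of $\sigma(k-1)$ and $\sigma(k)$. If $\sigma(k-1) < \sigma(k)$, then $m_{k-1} = \sigma(k-1) < m_k$. If $\sigma(k-1) = \sigma(k) + 1$, the only index $i \geq k$ with $\sigma(i) < \sigma(k-1)$ is $i = k$, so $m_{k-1} = \sigma(k-1) - 1 = m_k$. If $\sigma(k-1) \geq \sigma(k) + 2$, the integers in the open interval $(\sigma(k)+1, \sigma(k-1))$ number $\sigma(k-1) - \sigma(k) - 2$, so the count of $i \geq k$ with $\sigma(i) < \sigma(k-1)$ is at most $\sigma(k-1) - \sigma(k) - 1$, giving $m_{k-1} \geq \sigma(k) + 1 > m_k$. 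The first two cases are exactly Proposition \ref{prop:stat_perm}(\ref{prop:stat_perm_cond1}) and correspond to $m_k \geq m_{k-1}$; the third case is the simultaneous failure of Observation \ref{obs:oc_code}(1) and Proposition \ref{prop:stat_perm}(\ref{prop:stat_perm_cond1}), which completes the equivalence.

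\textbf{Main obstacle.} The subtle step will be the converse of Condition~(2): choosing the right witness $j$ to violate $m_j > m_k + 1$ across the two sub-cases (whether or not some value below $\sigma(k)$ appears after position $k$), and verifying in each sub-case that the chosen $j$ really gives $m_j \leq m_k + 1$. Keeping track of which values in $\ll 1, n\rr$ can lie in a given interval and occur at a prescribed position range is the real content; the rest reduces to interval-counting arguments that follow quickly from the rank identity for $m_k$.
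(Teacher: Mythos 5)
Your proposal is correct, and it takes a partly different route from the paper's proof, although both ultimately rest on the non-inversion table defining $\phi$. The paper treats the two implications asymmetrically: for ``$p_k$ in a corner implies (1) and (2)'' it follows the right-to-left letter-removal algorithm computing $\sigma$, noting that the prefix $1\cdots m_k(m_k+1)$ is never modified while $\sigma(n),\ldots,\sigma(k+1)$ are produced (so $\sigma(k)=m_k$, $\sigma(j)>m_k+1$ for $j>k$, and $\sigma(k-1)\in\{1,\ldots,m_k-1,m_k+1\}$), and only in the converse does it count non-inversions. You instead extract once the rank identity $m_k=\sigma(k)-|\{i>k:\sigma(i)<\sigma(k)\}|$ and use it uniformly in both directions, reorganizing the statement as two condition-wise equivalences: the second condition of \oref{obs:oc_code} is equivalent to condition \ref{prop:stat_perm_cond2} of \pref{prop:stat_perm} unconditionally, and under condition \ref{prop:stat_perm_cond2} (which forces $m_k=\sigma(k)$) the first conditions are equivalent via the trichotomy $\sigma(k-1)<\sigma(k)$, $\sigma(k-1)=\sigma(k)+1$, $\sigma(k-1)\geq\sigma(k)+2$. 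I checked the counting steps --- the bound $m_j\geq m_k+2$ in the forward direction, the minimal $j^*$ with $\sigma(j^*)<\sigma(k)$ giving $m_{j^*}\leq m_k$, the case $\sigma(j)=\sigma(k)+1$ giving $m_j=m_k+1$, and the three cases for $\sigma(k-1)$ giving $m_{k-1}=\sigma(k-1)$, $m_{k-1}=m_k$ and $m_{k-1}\geq m_k+1$ respectively --- and they are all valid, including the boundary case $k=n$ (where condition \ref{prop:stat_perm_cond2} is vacuous) and $k=2$ (where your third case is simply empty, which is harmless). What your version buys is a single bookkeeping tool and a cleaner two-sided structure, matching each condition of the observation with the corresponding condition of the proposition; what the paper's forward argument buys is a more transparent picture, read directly off the construction of $\sigma$, of why $\sigma(k)=m_k$ and why $\sigma(k-1)$ must land in $\{1,\ldots,m_k-1,m_k+1\}$.
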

\begin{proof}
We will first prove that if $p_k$ is in a corner of $T$, the index $k$ satisfies the conditions \ref{prop:stat_perm_cond1} and \ref{prop:stat_perm_cond2}. We use the characterisation of \oref{obs:oc_code} to prove the first implication. If $k=n$, we only need to prove condition \ref{prop:stat_perm_cond1}, there are two possibilities, if $m_n=n$, then $\s(n)=n$ and $\s(n-1)<\s(n)$, otherwise, in the construction process of $\s$, we choose $\s(n)=m_n$ hence for $\s(n-1)$ we choose the $m_{n-1}$-th letter of the word $1\cdots (m_n-1)(m_n+1)\cdots n$, and since $m_{n-1}\leq m_n$ we have $\s(n-1)\in\{1,\ldots,m_n-1,m_n+1\}$. In the case $k\neq n$, we know that $m_j>m_k+1$ for all $k<j\leq n$, hence during the construction of those $\s(j)$, the subword $1\cdots m_k(m_k+1)$ of the initial word $1\cdots n$ is never modified, in particular, $\s(j)>m_k+1$. Since $m_k=n$ implies $k=n$, the subword $1\cdots m_k(m_k+1)$ is well defined. Then, we have $\s(k)=m_k$ hence condition \ref{prop:stat_perm_cond2} is satisfied. Moreover, the $m_k$ first letters of $1\cdots n$, after removing $\s(n),\ldots,\s(k)$, are $1,2,\cdots,m_k-1,m_k+1$ thus $\s(k-1)\in\{1,\ldots,m_k-1,m_k+1\}$ as $m_{k-1}\leq m_k$, which prove condition \ref{prop:stat_perm_cond1}.

Conversely, let's suppose that $\s$ satisfies conditions 1 and 2. We will show that the encoding $(m_1,\ldots,m_n)$ of the corresponding tree-like tableau satisfies the conditions of \oref{obs:oc_code}. As $(m_1-1,\ldots,m_n-1)$ is the non-inversion table of $\s$, for $j\in\ll 1,n \rr$, $m_j-1$ corresponds to the number of $\s(i)$ such that $i<j$ and $\s(i)<\s(j)$. First, we notice that $\s(k)+1$ is at the left of $\s(k)$ by the condition \ref{prop:stat_perm_cond2}. For $j>k$, in the word $\s$, at the left of $\s(j)$ there are $\s(k)+1$, $\s(k)$ and all the integers that are at the left of $\s(k)$ and smaller than $\s(k)$. Hence $m_j-1\geq 2+m_k-1$ which is equivalent to $m_j>m_k+1$. Condition \ref{prop:stat_perm_cond2} implies that if for $j<k-1$ we have: $\s(j)<\s(k-1)$ implies $\s(j)<\s(k)$. Thus, $m_{k-1}\leq m_k$.
\end{proof}
Thanks to this last interpretation, we obtain a result refining \tref{thm:main_result}.

\begin{prop}
For $k\in\ll 2,n \rr$, the number of tree-like tableaux $T$ of size $n$ having $p_k$ in a corner is
\begin{equation}\label{eq:nb_pk_in_corner}
\frac{n!}{(n-k+2)(n-k+1)}+\left\lbrace\begin{array}{cl} (n-1)!&\text{if }k=n;\\ 0&\text{otherwise.}\\\end{array}\right.
\end{equation}
\end{prop}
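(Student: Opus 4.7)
My plan is to apply Proposition~\ref{prop:stat_perm} and count the permutations $\sigma = \phi(T)$ of $\{1,\ldots,n\}$ satisfying its two conditions, stratified by the value $m := \sigma(k)$. The generic case $k<n$ and the boundary case $k=n$ are treated separately, since the vanishing of condition (\ref{prop:stat_perm_cond2}) in the latter is precisely what produces the extra $(n-1)!$ term.

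For $k<n$, condition (\ref{prop:stat_perm_cond2}) forces every element of $\{1,\ldots,m+1\}\setminus\{m\}$ to sit in one of the first $k-1$ positions, which already requires $m\leq k-1$. Given $m$, condition (\ref{prop:stat_perm_cond1}) allows exactly $m$ values for $\sigma(k-1)$, namely the elements of $\{1,\ldots,m-1,m+1\}$. The remaining $m-1$ small values then distribute among positions $\{1,\ldots,k-2\}$ in $\binom{k-2}{m-1}(m-1)!$ ways, while the $n-m-1$ large values $\{m+2,\ldots,n\}$ fill the $n-m-1$ remaining positions in $(n-m-1)!$ ways. Summing over $m$ gives
\begin{equation*}
\sum_{m=1}^{k-1}\frac{m\,(k-2)!\,(n-m-1)!}{(k-m-1)!},
\end{equation*}
and after reindexing $r=k-m-1$ and splitting $k-1-r = (k-1)-r$, each piece reduces to a hockey-stick sum $\sum_{r=0}^{N}\binom{a+r}{a}=\binom{a+N+1}{a+1}$; a short simplification collapses the answer to $n!/[(n-k+2)(n-k+1)]$. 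For $k=n$, condition (\ref{prop:stat_perm_cond2}) is vacuous: if $\sigma(n)=n$ then condition (\ref{prop:stat_perm_cond1}) is automatic and contributes $(n-1)!$ permutations, while if $\sigma(n)=m<n$ there are $m$ choices for $\sigma(n-1)$ and $(n-2)!$ arrangements for the rest, contributing $(n-2)!\sum_{m=1}^{n-1}m = n!/2$. Adding the two gives $n!/2+(n-1)!$, matching the stated formula.

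The only non-routine step is the hockey-stick evaluation in the generic case; the combinatorial bookkeeping is dictated by Proposition~\ref{prop:stat_perm}. As a sanity check, summing the formula over $k\in\ll 2,n\rr$ yields $n!\sum_{j=1}^{n-1}1/[j(j+1)]+(n-1)! = n!(1-1/n)+(n-1)! = n!$, which recovers \tref{thm:main_result} and gives me confidence that the bookkeeping is consistent.
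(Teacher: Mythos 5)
Your proposal is correct and follows essentially the same route as the paper: both count the permutations $\s=\phi(T)$ satisfying the two conditions of \pref{prop:stat_perm}, stratified by the value of $\s(k)$, treating $k=n$ separately, and your count for each stratum agrees term by term with the paper's $(n-k)!(k-2)!\,i\binom{n-i-1}{n-k}$. The only difference is cosmetic: you evaluate the resulting binomial sum by an absorption identity plus hockey-stick, whereas the paper telescopes via the partial sums $S_i=\binom{n-i}{n-k+1}$; both collapse it to $\binom{n}{n-k+2}$.
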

\begin{proof}
In order to show this, we enumerate the permutations for which $k$ satisfies the conditions of \pref{prop:stat_perm}. First if $k=n$, only condition \ref{prop:stat_perm_cond1} matters. Thus, if $\s(n)=n$ we can choose the other value of $\s$ as we want, hence we have $(n-1)!$ possibilities to choose $\s$. Else, if we denote by $i$ the integer $\s(n)\leq n-1$, we have $i$ possibilities for $\s(n-1)$, namely $1,\ldots,i-1,i+1$ and, there is no other constrain in $\s$, hence we have $i(n-2)!$ such permutations. Doing the sum we obtain,
$$\sum_{i=1}^{n-1}i(n-2)!+(n-1)!=\frac{n!}{2}+(n-1)!,$$
which is equal to Equation~\eqref{eq:nb_pk_in_corner} when $k=n$.
Suppose now $k\neq n$, let $\s(k)=i$. Condition \ref{prop:stat_perm_cond2} tells us that the integers $\s(k+1),\ldots,\s(n)$ are inside $\ll i+2,n \rr$, hence $n-k\leq n-i-1$ which is equivalent to $i\leq k-1$. There are $\binom{n-i-1}{n-k}$ choices for those integers and $(n-k)!$ ways of ordering them. Then, we have $i$ choices for $\s(k-1)$, namely $1,\ldots,i-1$ and $i+1$. Finally, to define $\s(1),\ldots,\s(k-2)$, we have $(k-2)!$ possibilities. By summing we obtain:
\begin{equation}\label{eq:nb_pk_in_corner_non_simplifie}
(n-k)!(k-2)!\sum_{i=1}^{k-1}i\binom{n-i-1}{n-k}.
\end{equation}
Let $S_i=\sum_{l=i}^{k-1}\binom{n-l-1}{n-k}$ for $i\in\ll 1,k-1 \rr$ and $S_k=0$. We have that $S_i=\sum_{j=n-k}^{n-i-1}\binom{j}{n-k}=\binom{n-i}{n-k+1}$. We can simplify Equation~\eqref{eq:nb_pk_in_corner_non_simplifie} as follows:
\begin{displaymath}
\sum_{i=1}^{k-1}i\binom{n-i-1}{n-k}=\sum_{i=1}^{k-1}i(S_i-S_{i+1})=\sum_{i=1}^{k-1}S_i=\sum_{i=1}^{k-1}\binom{n-i}{n-k+1}=\binom{n}{n-k+2}
\end{displaymath}
We get the desired result since:
\begin{displaymath}
(n-k)!(k-2)!\binom{n}{n-k+2}=\frac{n!}{(n-k+2)(n-k+1)}
\end{displaymath}
\end{proof}
We can check we have $n!$ occupied corners by summing Equation~\eqref{eq:nb_pk_in_corner} for $k$ in $\ll 2,n \rr$:

\begin{flalign*}
&=(n-1)!+n!\sum_{k=2}^n\frac{1}{(n-k+2)(n-k+1)}\\
&=(n-1)!+n!\sum_{k=2}^n\left(\frac{1}{(n-k+1)}-\frac{1}{(n-k+2)}\right)\\
&=(n-1)!+n!\left(1-\frac{1}{n}\right)\\
&=(n-1)!+(n-1)(n-1)!\\
&=n!.
\end{flalign*}
\subsection{Extension to symmetric tree-like tableaux}\label{sec:sym}
Symmetric tree-like tableaux were introduced in \cite[Section~2.2]{avbona13}, they are the tree-like tableaux which are invariant by an axial symmetry with respect to their main diagonal. They are in bijection with symmetric alternative tableaux from \cite[Section~3.5]{Nad11} and type B permutation tableaux from \cite[Section~10.2.1]{LamWil}. The root point is the only point of the main diagonal of a symmetric tree-like tableau. Indeed, if there was another one, it would have a point to its left and above him, or neither of them which would contradict condition \ref{def:tlt_cond_2} of \dref{def:tlt}. Hence such a tree-like tableau has odd size. We denote by $\TS_{2n+1}$ the set of symmetric tree-like tableaux of size $2n+1$, there are $2^nn!$ of them (\cite[Section~2.2]{avbona13}).

\begin{thm}\label{thm:main_result_sym}
The number of occupied corners in the set $\TS_{2n+1}$ is given by
\begin{displaymath}
oc(\TS_{2n+1})=2^nn!.
\end{displaymath}
\end{thm}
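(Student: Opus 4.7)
The plan is to mimic the short bijective proof of \tref{thm:main_result} in the symmetric setting, constructing a bijection between occupied corners in elements of $\TS_{2n+1}$ and pairs $(T',k)\in\TS_{2n-1}\times\ll 1,2n\rr$. Since $|\TS_{2n-1}|=2^{n-1}(n-1)!$ and each such $T'$ has $2n$ border edges, this would give $oc(\TS_{2n+1})=2n\cdot 2^{n-1}(n-1)!=2^n n!$, the desired value. The base case $n=0$ is immediate: $\TS_1$ consists of a single root cell, which is itself an occupied corner.

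First, given an occupied corner $OC$ at position $(i,j)$ in $T\in\TS_{2n+1}$ with $n\geq 1$, I would exploit the facts that the root is the unique diagonal point and is never a corner for size at least $3$, so $i\neq j$ and the symmetric image $OC'=(j,i)$ is a distinct occupied corner of $T$. By condition~\ref{def:tlt_cond_2} of \dref{def:tlt}, exactly one of row $i$ or column $j$ contains another point besides $OC$; if it is row $i$ (in which case symmetry forces column $i$ to contain only the point of $OC'$), I would delete row $i$ together with column $i$; otherwise I would delete column $j$ together with row $j$. In either case the deletion removes exactly the two points $OC$ and $OC'$, producing a symmetric tree-like tableau $T'\in\TS_{2n-1}$, and I attach to $OC$ the index $k\in\ll 1,2n\rr$ of the border edge of $T'$ newly exposed by the deletion, which is horizontal in the first case and vertical in the second.

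For the inverse, given $(T',k)$, I would perform a symmetric insertion at $e_k(T')$ and its mirror $e_{\tilde k}(T')$: if $e_k$ is horizontal, insert a row of empty cells just below $e_k$ with a new point in the rightmost cell, and the symmetric column to the right of $e_{\tilde k}$ with a new point in its bottom cell (performing the dual operation when $e_k$ is vertical). The two new points form a symmetric pair of occupied corners in the resulting tableau $T\in\TS_{2n+1}$, and the forward map applied to the corner coming from the insertion at $e_k$ (rather than at $e_{\tilde k}$) recovers $(T',k)$, so the two constructions are mutually inverse.

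The hard part will be verifying that the symmetric row/column deletion is well defined: that the resulting diagram is a self-conjugate Young diagram (handling carefully the diagonal cell $(i,i)$, which is erased by both the row and the column operation), that no row or column becomes empty (where the case hypothesis together with its symmetric image is essential), and that the remaining points still satisfy condition~\ref{def:tlt_cond_2} of \dref{def:tlt}, which is immediate because corners never support any other point. Once these verifications are in place, the cardinality $|\TS_{2n-1}\times\ll 1,2n\rr|=2^n n!$ concludes the argument.
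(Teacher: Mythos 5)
Your proof is essentially the paper's: it removes the nearly empty row through the occupied corner together with its mirror image and records where to re-insert, the only cosmetic difference being that you encode the extra choice as a single border-edge index $k\in\ll 1,2n\rr$ of $T'$, where the paper uses an index $i\in\ll 1,n\rr$ together with a flag in $\{a,b\}$ recording whether the corner lies below or above the diagonal. One wording slip to fix: the row or column of $OC$ that you delete (together with its mirror) must be the one containing \emph{no} point other than that of $OC$, as your parenthetical and your claim that only $OC$ and $OC'$ are removed already indicate, whereas your case condition states the opposite polarity.
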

\begin{proof}
Similarly to \ssref{sec:shortproof}, we will put in bijections the set of occupied corners of $\TS_{2n+1}$ with the set of triplets composed of a symmetric tree-like tableau of size $2n-1$, an integer of $\ll 1,n \rr$ and an element $\rho$ of $\{a,b\}$. Let $OC$ be an occupied corner in $\TS_{2n+1}$, let $T$ be the tree-like tableau corresponding to $OC$. If $OC$ is below the main diagonal of $T$, we set $\rho$ to $b$ and we denote by $i$ the index of the horizontal border edge of $OC$. Else, we consider the occupied corner symmetric to $OC$ and we denote by $i$ the index of its horizontal border edge, moreover we set $\rho$ to $a$. We denote $T'$ the symmetric tableau obtained after removing the nearly empty row or column in which $OC$ is and its symmetric. We associate to $OC$ the triplet $(T',i,\rho)$. From the pair $(T',i)$ we construct back the tree-like tableau $T$ by inserting a row or a column at $e_i(T')$ and putting a point in the right-most or the bottom cell, and doing also the symmetric. We constructed two new occupied corners, $OC$ correspond to the one below the main diagonal if $\rho=b$ and to the one above the main diagonal if $\rho=a$. Finally $oc(\TS_{2n+1})=2^{n-1}(n-1)!\times n\times 2=2^nn!$.
\end{proof}
Let $Q_n=\sum_{T\in\TS_{2n+1}}x^{oc(T)}=\sum_{k\geq 0}b_{n,k}x^{2k}$. In the case of symmetric tree-like tableaux, we obtain the following recurrence relation. For $n\geq2$ and $k\geq1$, we have:
$$2k\cdot b_{n,k}=2\left[2k\cdot b_{n-1,k}+(n-2(k-1))b_{n-1,k-1}\right].$$
This can be translated as a recurrence relation over the polynomials $Q_n$ as follows:
$$xQ_n=2\left[xQ_{n-1}'+nx^2Q_{n-1}-x^3Q_{n-1}'\right]$$
Which is equivalent to:
\begin{equation}\label{eq:main_result_sym_pol_version}
Q_n'=2nx\cdot Q_{n-1}+2(1-x^2)Q_{n-1}'
\end{equation}
If we set $Q_0=1$, the Equation~\eqref{eq:main_result_sym_pol_version} is also true for $n=1$. The sequence of polynomials $(Q_n)_{n\geq 0}$ is uniquely defined by:
$$\left\{
\begin{array}{l}
\forall n\geq 1,\text{ }Q_n'=2nx\cdot Q_{n-1}+2(1-x^2)Q_{n-1}'\text{ and } Q_n(1)=2^nn!;\\
Q_0=1.
\end{array}
\right.
$$
To illustrate, we give the first terms of the sequence $(Q_n)_{n\geq0}$:
\begin{flalign*}
Q_0&=1\\
Q_1&=x^2+1\\
Q_2&=4x^2+4\\
Q_3&=2x^4+20x^2+26\\
   &\ \ \vdots\\
Q_9&=384x^{10} + 55680x^8 + 1386240x^6 + 13566720x^4 + 61380480x^2 + 109405056.\\
\end{flalign*}

As we did for occupied corners in $\T_n$, we can compute the variance. It is equal to $\frac{n-1}{n}$, hence we can draw the same conclusion, it tends to 1 when $n$ goes to infinity.

\subsection{A refinement of the main result}\label{sec:thm_oc_NA}
A \emph{non-ambiguous class} is a set of tree-like tableaux which have their points arranged in the same way (\fref{fig:non_ambiguous_class}). These classes are linked with non-ambiguous trees introduced in \cite{abbs14}. Indeed, they can be described as the set of tree-like tableaux which have the same underlying non-ambiguous tree.
\begin{thm}\label{thm:oc_NA}
In a non-ambiguous class, on average there is one occupied corner per tree-like tableau.
\end{thm}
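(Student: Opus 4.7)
The plan is to build, for a fixed non-ambiguous class $\mathcal{C}$, an explicit bijection between the pairs $(T, OC)$ with $T \in \mathcal{C}$ and $OC$ an occupied corner of $T$, and the set $\mathcal{C}$ itself. This immediately gives $oc(\mathcal{C}) = |\mathcal{C}|$.

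First I would encode each $T \in \mathcal{C}$ by its shape $\lambda$, since the point positions are rigid across the class. If the underlying tree has $k$ rows and $l$ columns and $f_i$ denotes the rightmost column containing a point in row $i$, then, setting $g_i = \max_{j\ge i} f_j$, the shapes realising $\tau$ are exactly the partitions $\lambda$ with $g \subseteq \lambda \subseteq (l^k)$. A short verification shows that the occupied corners of $\lambda$ are the common corners of $\lambda$ and the minimal shape $g$: the cells $(r, g_r)$ with $\lambda_r = g_r$ and $\lambda_{r+1} < g_r$ (where $\lambda_{k+1} = 0$).

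The bijection is then
\[
\phi(\lambda, r) = \lambda', \qquad \lambda'_j = \lambda_j \text{ for } j \le r,\quad \lambda'_j = \lambda_j + 1 \text{ for } j > r
\]
(with the convention $\lambda' = \lambda$ when $r = k$). The occupied-corner hypotheses $\lambda_r = g_r$ and $\lambda_{r+1} < g_r$ are exactly what is needed for $\lambda'$ to remain a partition in $\mathcal{C}$. Conversely, given any $\lambda' \in \mathcal{C}$ I would let $r^*$ be the largest index for which $\lambda'_{r^*} = g_{r^*}$ -- well-defined since $\lambda'_1 = l = g_1$ -- and recover $\lambda$ by decrementing $\lambda'_j$ by one for all $j > r^*$ (or setting $\lambda = \lambda'$ when $r^* = k$). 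The maximality of $r^*$ yields $\lambda'_j > g_j$ for every $j > r^*$, which is precisely the slack needed both to carry out the decrement inside $\mathcal{C}$ and to make $(r^*, g_{r^*})$ an occupied corner of the recovered $\lambda$.

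The main obstacle I anticipate is the case analysis required to verify bijectivity: one has to check that $\phi$ preserves the Young-diagram property, the containment $g \subseteq \lambda' \subseteq (l^k)$, and the occupied-corner conditions in both directions, with separate handling of the boundary row $r = k$. These verifications are routine inequality manipulations, but they must be arranged carefully so that no edge case is missed. Once the bijection is established, summing over $\mathcal{C}$ yields $\sum_{T\in\mathcal{C}} oc(T) = |\mathcal{C}|$ directly, which is the refinement of \tref{thm:main_result} claimed.
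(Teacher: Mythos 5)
Your argument is correct, and at the top level it is the same strategy as the paper: identify the non-ambiguous class with the shapes (equivalently, border paths) squeezed between the minimal shape $g$ of the canonical representative and the rectangle $(l^k)$, observe that the occupied corners of a member are exactly its corners in common with $g$, and then give an explicit bijection between pairs (member, occupied corner) and members of the class. The actual bijection, however, is genuinely different from the paper's. You add one cell to every row strictly below the chosen corner (i.e., you push the Southwest portion of the border one step East), with the bottom-row corner as the identity case, and you invert by taking the largest $r$ with $\lambda'_r=g_r$. The paper instead shifts the portion of the border Northeast of the chosen corner one step South, with the Northeast-most corner of the canonical path as the identity case, and inverts by locating the Northeast-most East step the two paths share. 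These are really distinct maps: for $g=(3,1)$ the pair consisting of $\lambda=(3,2)$ and its occupied corner in row $1$ is fixed by the paper's map (identity case) but sent to $(3,3)$ by yours. Your version has the advantage that the class, the corner condition ($\lambda_r=g_r$ and $\lambda_{r+1}<g_r$, which indeed forces $g_r=f_r$ so the corner is occupied) and both directions of the bijection are pure inequalities on the parts, so the deferred verifications are genuinely routine (e.g.\ $\lambda_j\le\lambda_{r+1}<g_r\le l$ keeps the image inside the rectangle); the paper's path-shifting formulation is closer to the pictures and to the statement about lattice paths weakly below $P$, but its inverse is a little less mechanical to check.
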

\fref{fig:non_ambiguous_class} gives an example of a non-ambiguous class of size 5 with 5 occupied corners in total. It is a refinement of \tref{thm:main_result} since we can partition $\T_n$ in non-ambiguous classes.
\begin{figure}[h!]
    \begin{center}
    \captionsetup[subfigure]{margin=-9pt,format=hang,singlelinecheck=false}
    \subfloat[$oc=3$]{\includegraphics[scale=.4]{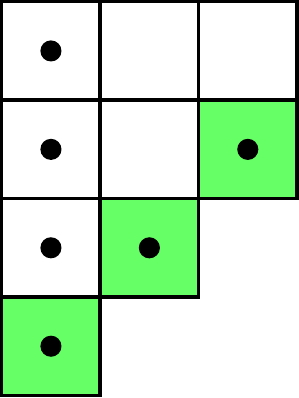}}
    \hfil
    \subfloat[$oc=1$]{\includegraphics[scale=.4]{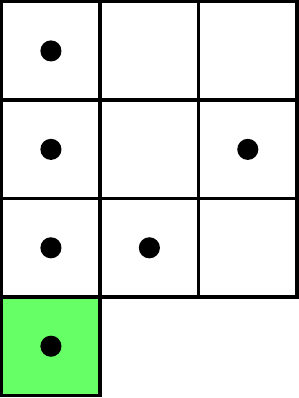}}
    \hfil
    \subfloat[$oc=1$]{\includegraphics[scale=.4]{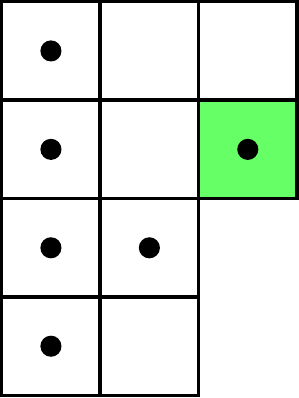}}
    \hfil
    \subfloat[$oc=0$]{\includegraphics[scale=.4]{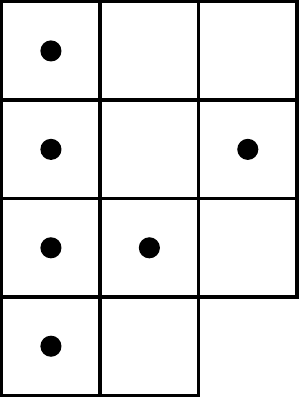}}
    \hfil
    \subfloat[$oc=0$]{\includegraphics[scale=.4]{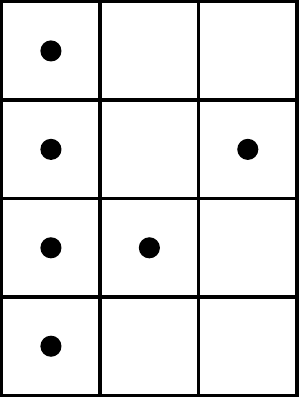}}
    \end{center}
    \caption{A non-ambiguous class in $\T_5$}
    \label{fig:non_ambiguous_class}
\end{figure}
We first show that it can be restated over paths. For each non-ambiguous class, we choose a canonical representative: the only tree-like tableau whose corners are all occupied. The other tree-like tableaux of the class are obtained by choosing a lattice path, with steps East and North, weakly below the Southeast border of the canonical representative. \fref{fig:correspondence_NA_paths} gives us an example of this correspondence, only the points which are in the corners of the canonical representative are drawn.
\begin{figure}[h!]
    \begin{center}
    \captionsetup[subfigure]{margin=-9pt,format=hang,singlelinecheck=false}
    \subfloat[A canonical representative of a non-ambiguous class.]{\includegraphics[scale=.35]{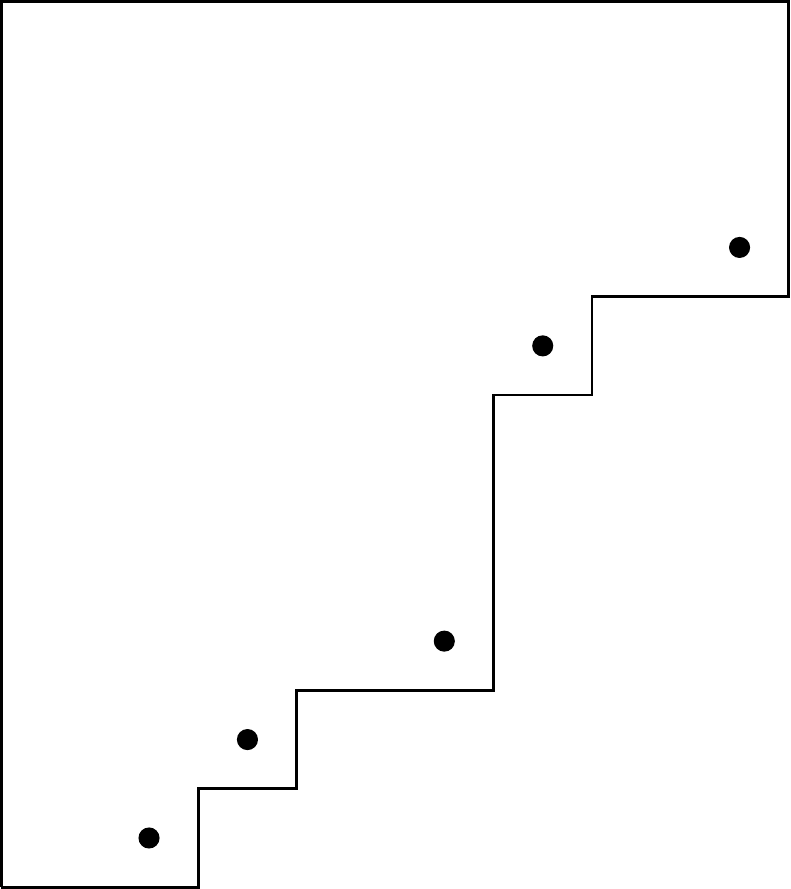}}
    \hfil
    \subfloat[An other element of a non-ambiguous class.]{\includegraphics[scale=.35]{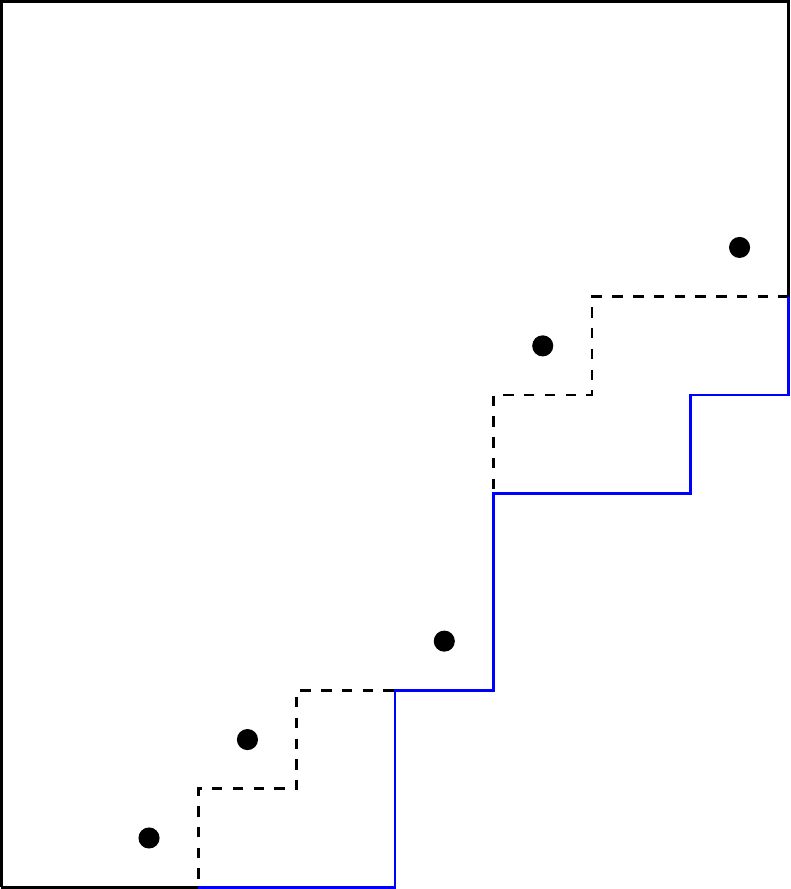}}
    \end{center}
    \caption{Correspondence between non-ambiguous class and paths.}
    \label{fig:correspondence_NA_paths}
\end{figure}
In this study, the only thing that matters is the subpath of the Southeast border of the canonical representative which is between the Southwest-most corner and the Northeast-most corner. In the rest of the section, we will only consider lattice paths with North and East steps. Let $P$ be a lattice path starting with a corner, \textit{i.e} an East step followed by a North step, and finishing with a corner. For a lattice path $P'$ weakly below $P$ we denote by $cc(P')$ the number of corners $P'$ and $P$ have in common. Then we can restate \tref{thm:oc_NA} with the following equality:
\begin{displaymath}
|\{\text{lattice paths weakly below }P\}|=\sum_{P'\text{ weakly below }P}cc(P').
\end{displaymath}
\fref{fig:paths_class} shows us the corresponding example in terms of paths of \fref{fig:non_ambiguous_class}.
\begin{figure}[h!]
    \begin{center}
    \captionsetup[subfigure]{margin=-9pt,format=hang,singlelinecheck=false}
    \subfloat[$oc=3$]{\includegraphics[scale=.4]{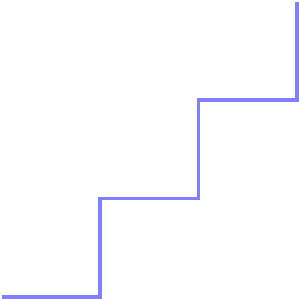}}
    \hfil
    \subfloat[$oc=1$]{\includegraphics[scale=.4]{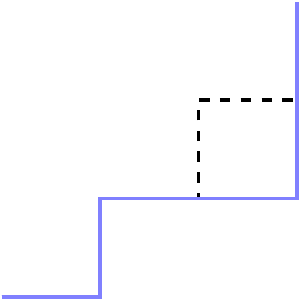}}
    \hfil
    \subfloat[$oc=1$]{\includegraphics[scale=.4]{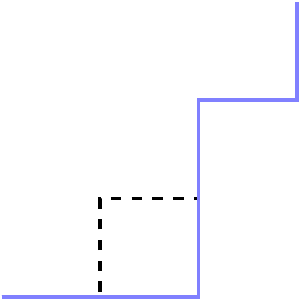}}
    \hfil
    \subfloat[$oc=0$]{\includegraphics[scale=.4]{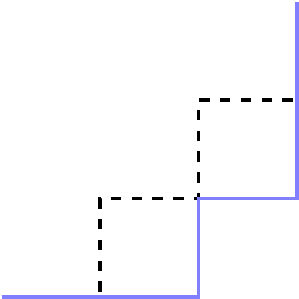}}
    \hfil
    \subfloat[$oc=0$]{\includegraphics[scale=.4]{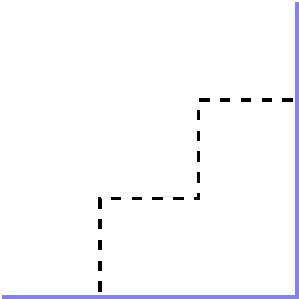}}
    \end{center}
    \caption{A non-ambiguous class in $\T_5$}
    \label{fig:paths_class}
\end{figure}
In order to show this, we will put in bijection each path $P'$ weakly below $P$ with $cc(P')=k\geq 1$, with $k$ paths weakly below $P$. Let $c_1,\ldots,c_k$ be the $k$ corners of $P'$ which are in common with $P$. For each $i\in\ll 1,k \rr$, if $c_i$ corresponds to the Northeast-most corner of $P$, we associate to $c_i$ the path $P'$ itself, else the path obtained from $P'$ by shifting by one step to the South the subpath of $P'$ which is at the Northeast of $c_i$. Conversely, if we consider a path $P'$ weakly below $P$, to find the corner to which it is associated, first we spot the Northeast-most East step $P'$ and $P$ have in common, we denote this step by $s$. Then, if $s$ corresponds to the last East step of $P$, we associate to $P'$ his Northeast-most corner. Else, we shift by a North step the subpath of $P'$ which is to the Northeast of $s$, and we associate to $P'$ the corner of this new path which have $s$ as its East step. An example of the bijection is given in \fref{fig:thm_NAT}.

\begin{figure}[h!]
    \begin{center}
        \subfloat[A path P (dashed) and a path P'(blue) weakly below P, with $cc(P')=3$.]{\includegraphics[scale=.5]{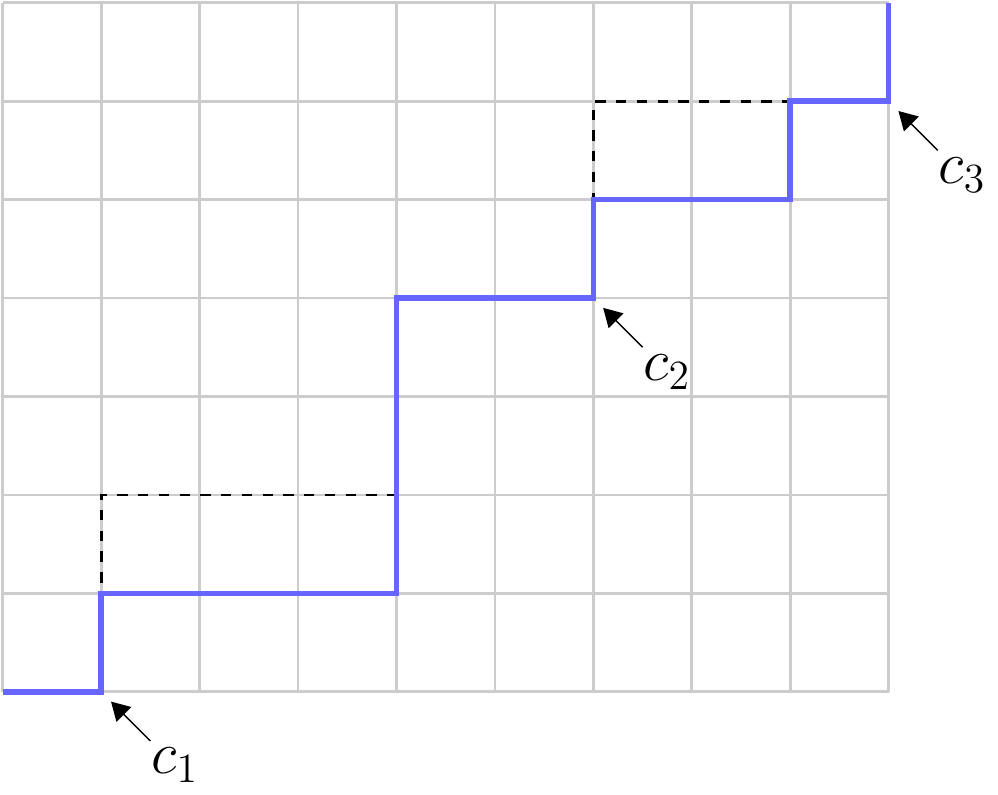}}
    \end{center}
    \begin{center}
        \subfloat[Path associated to $c_1$.]{\includegraphics[scale=.4]{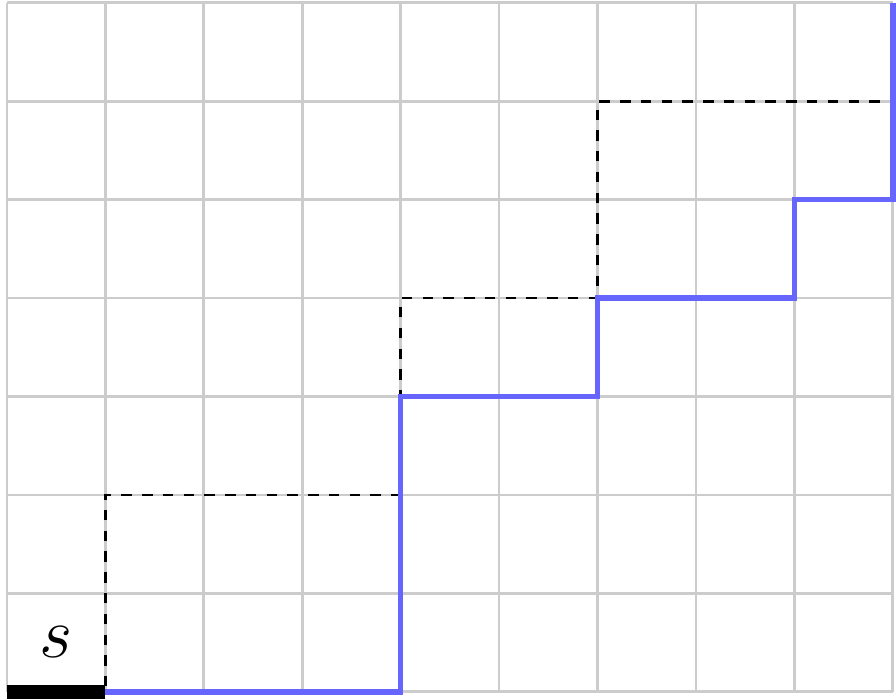}}
        \hfill
        \subfloat[Path associated to $c_2$.]{\includegraphics[scale=.4]{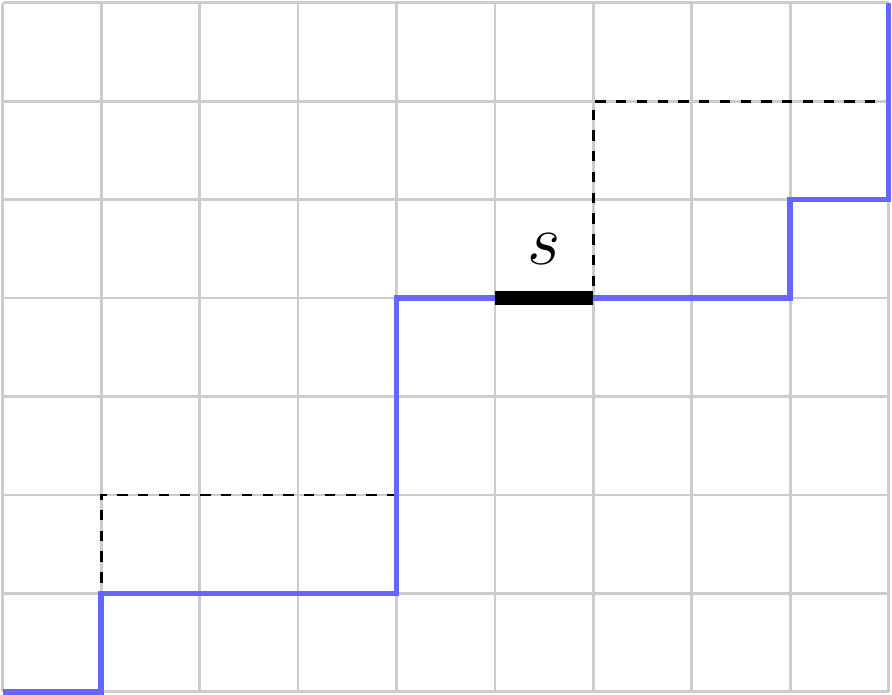}}
        \hfill
        \subfloat[Path associated to $c_3$.]{\includegraphics[scale=.4]{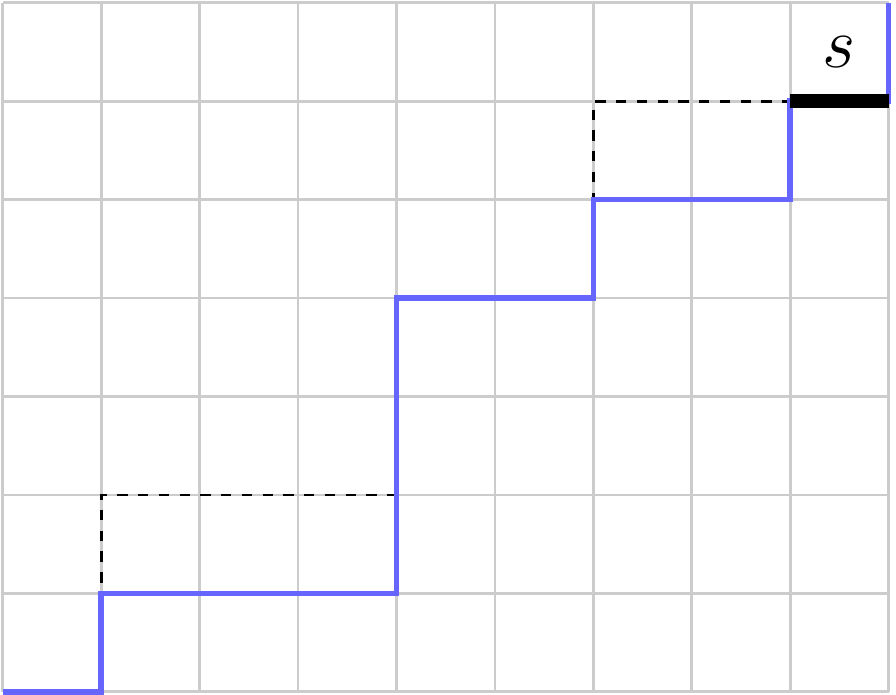}}
        \caption{Bijective proof of the \tref{thm:oc_NA}.}
        \label{fig:thm_NAT}
    \end{center}
\end{figure}
\section{The link between corners and the PASEP}\label{sec:conj}
As explained in the introduction, the equilibrium state of the PASEP can be described by tree-like tableaux. We define the weight $w(T)$ of a tree-like tableau as follows: $w(T)=q^{cr(T)}\alpha^{-tp(T)}\beta^{-lp(T)}$, where $cr(T)$, $tp(T)$ and $lp(T)$ correspond to the number of crossings, top points and left points of $T$, respectively (see \cite{avbona13} for the definitions of those statistics). Let $Z_n$ be the partition function $\sum_{T\in\T_{n+1}}w(T)$. Let $s$ be a word of size $n$ corresponding to a state in the PASEP, we denote by $\T_{n+1}^{\lambda(s)}$ the set of tree-like tableaux of size $n+1$ that projects to $s$. As shown in \cite{CorWil_Tableaux}, the stationary probability of $s$ is equal to:
$$\frac{\sum_{T\in\T_{n+1}^{\lambda(s)}}w(T)}{Z_n}.$$ Let $X(s)$ be the random variable counting the number of locations of a state $s$ where a particle may jump to the right or to the left. Let $T$ be a tree-like tableau that projects to $s$. An \emph{inner corner} of $T$ is a succession of a vertical border edge by an horizontal border edge, we denote by $ic(T)$ the number of inner corners of $T$. We have the immediate relation $ic(T)=c(T)-1$ where $c(T)$ represent the number of corners of $T$. With those notations, $X(s)=c(T)+ic(T)=2c(T)-1$. We can now compute the theoretical value of the expected value of $X$ in terms of tree-like tableaux.
\begin{flalign*}
\E(X)   &=\sum_{s\in\{\circ,\bullet\}^n}\Prob(s)\cdot X(s)\\
        &=\frac{1}{Z_n}\sum_{s\in\{\circ,\bullet\}^n}\sum_{T\in\T_{n+1}^{\lambda(s)}}w(T)(2c(T)-1)\\
        &=\frac{1}{Z_n}\sum_{T\in\T_{n+1}}w(T)(2c(T)-1)
\end{flalign*}
By computer exploration using Sage \cite{sage} we found the following conjecture.
\begin{conj}\label{conj:ec_conj}
The number of corners in $\T_n$ is $n!\times\frac{n+4}{6}$.
\end{conj}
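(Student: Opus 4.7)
The conjecture is that $c(\T_n)=n!\cdot\frac{n+4}{6}$. By \tref{thm:main_result} we already know $oc(\T_n)=n!$, so the content is entirely in the empty-corner count
\begin{displaymath}
ec(\T_n)=n!\cdot\frac{n-2}{6}=\binom{n}{3}(n-2)!.
\end{displaymath}
The shape $\binom{n}{3}(n-2)!$ is strongly suggestive of a bijection between empty corners in $\T_n$ and pairs consisting of a tree-like tableau in $\T_{n-2}$ together with a $3$-element subset of $\ll 1,n\rr$; it is also consistent with the small cases $n=3$ (one empty corner) and $n=4$ (eight empty corners), which can be checked by hand.

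My first plan would be to extend the polynomial argument of \ssref{sec:shortproof} by tracking both types of corners at once. Introduce the bivariate polynomial $R_n(x,y)=\sum_{T\in\T_n}x^{oc(T)}y^{ec(T)}$, so that $R_n(x,1)=P_n(x)$ and the quantity of interest is $c(\T_n)=(\partial_xR_n+\partial_yR_n)(1,1)$. Since \ssref{sec:shortproof} already catalogues exactly how $oc$ changes under $Insertpoint$ (the new point always sits at a corner, and among the $n$ insertion positions $2\,oc(T')$ of them destroy one of $T'$'s occupied corners while the others do not), what remains is to follow the empty corners: an inserted row or column may fill a previously empty corner of $T'$, and the new Southeast border, possibly reshaped by a ribbon, may produce new empty cells that are corners of $T$. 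Once a recurrence for $R_n$ in terms of $R_{n-1}$ analogous to Equation~\eqref{eq:main_result_pol_version} is obtained, differentiating at $(1,1)$ and combining with $P_n'(1)=n!$ should yield $c(\T_n)=\frac{n+4}{6}n!$.

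The step I expect to be hardest is the ribbon case $i<sp(T')$. A ribbon can span several border edges of $T'$, converting previously convex corners into interior cells and simultaneously creating new convex cells at its endpoints; whether each such new convex cell ends up occupied (holding the inserted point) or empty depends on the local geometry around the special point. Carrying out this case analysis uniformly in $T'$ is almost certainly the obstacle that blocked the straightforward extension of \tref{thm:main_result} to all corners. If the polynomial plan becomes unwieldy, I would fall back on a direct bijective strategy guided by the factorisation $\binom{n}{3}(n-2)!$: given an empty corner at cell $(i,j)$ of $T$, delete the rightmost point of row $i$ and the bottommost point of column $j$, contracting any row or column thereby emptied, to obtain a tableau in $\T_{n-2}$; then record as extra data the border-edge index of $(i,j)$ together with the two positions of the deleted points, which should give the $3$-subset of $\ll 1,n\rr$. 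The nontrivial step is proving that this map is well-defined (it must land in $\T_{n-2}$) and invertible, and it may require separating the case where the row or column of the empty corner contains only one extra point from the general case.
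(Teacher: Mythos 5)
Your proposal does not prove the statement; it is a plan with the decisive steps left open, and those are exactly the steps on which the problem currently stands unresolved. Note that in the paper this statement is a \emph{conjecture}, found by computer exploration, and the author explicitly records that the two proofs of \tref{thm:main_result} could not be adapted: for the insertion/polynomial route one would need to control how \emph{all} corners (not just occupied ones) evolve under $Insertpoint$, and this is precisely your ``hardest step'' (the ribbon case $i<sp(T')$), which you identify but do not carry out. Without an actual recurrence for $R_n(x,y)$ --- i.e.\ a uniform description of how many empty corners of $T'$ are destroyed and how many created for each choice of border edge, including insertions that lay a ribbon across several corners --- differentiating at $(1,1)$ yields nothing, so the first half of your plan has no content beyond the (correct) numerology $ec(\T_n)=\binom{n}{3}(n-2)!$.

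The fallback bijective sketch also has a genuine gap beyond being unfinished: deleting the rightmost point of row $i$ and the bottommost point of column $j$ of an empty corner need not leave a tree-like tableau at all, since other points of the tableau may have had exactly one of the deleted points as their ``parent'' in the sense of condition~\ref{def:tlt_cond_2} of \dref{def:tlt}, and after deletion they can end up with no point above in their column nor to their left in their row (or the contraction can leave an empty row or column elsewhere). Moreover it is not explained how the three recorded positions form a $3$-subset of $\ll 1,n\rr$ in a way that is recoverable from the smaller tableau, nor why distinct empty corners of distinct tableaux give distinct triples. So neither route, as written, establishes the count $n!\,\frac{n+4}{6}$; what you have is a restatement of the conjecture in the equivalent form $ec(\T_n)=\binom{n}{3}(n-2)!$ together with a correct diagnosis of where the difficulty lies.
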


In the case $\alpha=\beta=q=1$ and $\delta=\gamma=0$, then $w(T)=1$, hence this conjecture would imply that $$\E(X)=2\frac{(n+1)+4}{6}-1=\frac{n+2}{3}.$$ We were not able to adapt the proofs of \tref{thm:main_result} to prove this conjecture. To adapt the first proof, we should construct in average $\frac{n(n+4)}{6}$ corners in $\T_n$ from a tree-like tableau of size $n-1$. For the second one, we should control the behaviour of corners during an insertion, and translate this behaviour in terms of the code. We also give the symmetric version of the \coref{conj:ec_conj}.
\begin{conj}\label{conj:ec_sym_conj}
The number of corners in $\TS_{2n+1}$ is $2^n\times n\times\frac{4n+13}{12}$.
\end{conj}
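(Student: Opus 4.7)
\bigskip

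\noindent\textbf{Proof plan.} My approach would be to adapt the short bijective argument of \ssref{sec:sym} so as to track \emph{all} corners rather than only the occupied ones. Let $c(T)$ denote the total number of corners of $T$ and set
$$R_n(x) = \sum_{T\in\TS_{2n+1}} x^{c(T)}.$$
The conjecture is equivalent to $R_n'(1) = 2^n\,n!\,\frac{4n+13}{12}$, which I would try to extract from a recurrence obtained via the symmetric insertion algorithm, in the spirit of the recurrence for the occupied-corner polynomial $Q_n$ in \ssref{sec:sym}.

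The key combinatorial step is to understand how $c(T)$ changes when a symmetric double insertion is performed at a pair of mirror border edges of a tableau $T'\in\TS_{2n-1}$. Four phenomena must be tracked simultaneously: (a) whether the newly inserted point lies in a corner of the resulting tableau; (b) which corners of $T'$ remain corners of $T$; (c) which corners of $T'$ are destroyed---for example when the inserted row extends an existing row whose rightmost cell was a corner, or when the ribbon added in the case $i<sp(T')$ covers cells adjacent to an existing corner; and (d) new corners produced along the ribbon itself. Contributions (a) and (b) are analogous to the analysis in \ssref{sec:shortproof} and should be manageable, but (c) and (d) are nonlocal: they depend on the full ribbon path and hence on the relative positions of $i$ and $sp(T')$.

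The main obstacle is precisely this nonlocality, which makes it unlikely that a recurrence on $R_n$ alone will close. I would therefore introduce a refined bivariate polynomial tracking corners jointly with an auxiliary shape statistic---for instance the index of the special point, or the number of horizontal border edges immediately followed by a vertical one at a prescribed height---and attempt to derive a joint recurrence from which $\frac{4n+13}{12}$ can be read off at $x=1$. This is essentially the difficulty the authors highlight at the end of \sref{sec:conj}, and a genuinely new combinatorial idea may well be required. An alternative worth trying is to translate corners into patterns on the signed permutations in bijection with $\TS_{2n+1}$ via the type-$B$ tableaux of \cite{LamWil}, in the spirit of \pref{prop:stat_perm}, and to enumerate those patterns directly.
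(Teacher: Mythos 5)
This statement is not proved in the paper at all: it is \coref{conj:ec_sym_conj}, found by computer exploration, and the author explicitly states that the proofs of \tref{thm:main_result} could not be adapted to it. So there is no paper proof to compare you against; the only question is whether your text proves the formula, and it does not. What you have written is a research plan. You introduce $R_n(x)=\sum_{T\in\TS_{2n+1}}x^{c(T)}$ and hope to mimic the recurrence for $Q_n$ from \ssref{sec:sym}, but you never derive any recurrence for $R_n$, and you yourself pinpoint why the mimicry fails: when the insertion is performed at an edge with $i<sp(T')$, the added ribbon destroys and creates corners in a way that depends on the whole Southeast border, so the corner count does not evolve by the local rule that made the occupied-corner argument work. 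Naming this obstacle is not overcoming it. The auxiliary statistics you suggest (index of the special point, counts of certain border patterns) are not shown to close into a finite system of recurrences, and nothing in the proposal ever produces the specific constant $\frac{4n+13}{12}$; the alternative route through type-B permutation tableaux and signed permutations is likewise only a pointer, with no pattern identified and no enumeration carried out. In short, the conjecture remains exactly as open after your proposal as before it, and the missing content is precisely the quantitative control of ribbon-induced corner creation/destruction (or a substitute bijective/enumerative idea) that the paper says it lacks.

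One small point of bookkeeping: you restate the target as $R_n'(1)=2^n\,n!\,\frac{4n+13}{12}$, whereas the conjecture as printed reads $2^n\times n\times\frac{4n+13}{12}$. Since $|\TS_{2n+1}|=2^nn!$ and the introduction asserts that the \emph{average} number of corners should be $\frac{4n+13}{12}$, your reading (with $n!$, the printed $n$ presumably a typo) is the intended one, but if you pursue this you should say so explicitly and verify the normalization against small cases before hunting for a recurrence. Your instinct that a genuinely new ingredient is needed, and that passing to (type-B) permutation tableaux or to the PASEP itself may be the right lever, is reasonable; but as it stands the proposal contains no proof of either the general-corner recurrence or the closed form.
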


\subparagraph*{Acknowledgements}
I am grateful to X. Viennot and O. Mandelshtam for helpful discussions which allowed me to find respectively the short proof of \tref{thm:main_result} and the proof of \tref{thm:oc_NA}.

This research was driven by computer exploration using the open-source software \texttt{Sage}~\cite{sage} and its algebraic combinatorics features developed by the \texttt{Sage-Combinat} community~\cite{Sage-Combinat}.

\bibliographystyle{alpha}
\bibliography{bibliographie}

\end{document}